\newcommand{\R}{\mathds{R}}
\newcommand{\f}{\rightarrow}                  
\newcommand{\C}{\mathds{C}}            
\newcommand{\de}{\partial}
\newtheorem{theor}{Theorem}
\DeclareMathOperator{\grad}{grad}
\begin{document}

\title[Calabi's manifold is globally symplectomorphic to $\R^{2n}$]{Calabi's inhomogeneous Einstein manifold is globally symplectomorphic to $\R^{2n}$}
\author[A. Loi, M. Zedda]{Andrea Loi, Michela Zedda}
\address{Dipartimento di Matematica e Informatica, Universit\`{a} di Cagliari,
Via Ospedale 72, 09124 Cagliari, Italy}
\email{loi@unica.it; michela.zedda@gmail.com  }
\thanks{
The first author was (partially) supported by ESF within the program ÒContact and Symplectic TopologyÓ;
the second author was  supported by  RAS
through a grant financed with the ``Sardinia PO FSE 2007-2013'' funds and 
provided according to the L.R. $7/2007$.}
\date{}
\subjclass[2000]{53D05;  58F06.} %Kaehler and hermitian geoemtry; differentiable maps
\keywords{K\"{a}hler metrics; symplectic coordinates; Darboux theorem; Calabi's inhomogeneous Einstein metric}

\begin{abstract}
We construct explicit global symplectic coordinates for the Calabi's inhomogeneous K\"ahler--Einstein metric on tubular domains.
\end{abstract}

\maketitle

\section{Introduction}
Let $\omega$ be  a K\"ahler form on a $n$-dimensional complex manifold  $M$   diffeomorphic to $\R^{2n}\simeq\C^n$.
One basic and fundamental question from the symplectic point of view is to understand when $(M,\omega)$ admits global symplectic coordinates, i.e. when there exists a global diffeomorphism $\Psi\!: M\f \R^{2n}$ such that $\Psi^*\omega_0=\omega$, where $\omega_0=\sum_{j=1}^ndx_j\wedge dy_j$ is the standard symplectic form on $\R^{2n}$ (the existence of a local symplectic diffeomorphism is guaranteed by the celebrated Darboux Theorem). In general the previous question has a negative answer after Gromov's discovery \cite{Gromov} of the existence of exotic symplectic structures on $\R^{2n}$ (see also \cite{Bates90} for an explicit construction of a $4$-dimensional symplectic manifold diffeomorphic to $\R^4$ which cannot be symplectically embedded in $(\R^4, \omega_0)$). Therefore it is natural to look for sufficient conditions, related to the Riemannian or to the complex structure of the manifold involved, which assure the existence of global symplectic coordinates.  
%Moreover, once it is known that such symplectic coordinates exist it is interesting to write them explicitely.
D. McDuff \cite{mcduff} proved a global version of Darboux Theorem for complete and simply-connected K\"ahler manifolds with non positive sectional curvature and in \cite{DiScalaLoi08} the first author and A. Di Scala provide a construction of a global symplectomorphism from bounded symmetric domains equipped with the Bergman metric to $(\R^{2n},\omega_0)$, using the theory of Jordan triple systems. Constructions of explicit  global symplectic coordinates on some complex domains (e.g. Reinhardt domains or Lebrun's Ricci--flat metric on $\C^2$)  is given in  \cite{CuccuLoi06} and \cite{LoiZuddas08}.

In this paper we construct  explicit global symplectic coordinates for the Calabi's inhomogeneous K\"ahler--Einstein form $\omega$ on the complex tubular domains $M=\frac{1}{2}D_a\oplus i\R^n\subset\C^n$, $n\geq 2$, 
where $D_a\subset {\R}^n$ is the open ball  of $\R^n$ centered at the origin and of radius $a$. 
 Our  main result is  the following result (see next section for details):
 \begin{theor}\label{mainteor}
For all $n\geq 2$, the K\"ahler manifold  $(M , \omega )$ is globally  symplectomorphic to $(\R^{2n},\omega_0)$ via  the map:
\begin{equation}\label{symplectomorphism}
\Phi\!:M \f\R^n\oplus i\R^n\simeq\R^{2n},\ (x,y)\mapsto \left(\grad f,y\right),
\end{equation}
where $f\!:D_a\f \R, x=(x_1, \dots x_n)\mapsto f(x)$ is a  K\"ahler potential   for $\omega$,  i.e.  $\omega=\frac{i}{2}\partial\bar\partial f$, and
$\grad f=(\frac{\partial f}{\partial x_1}, \dots ,\frac{\partial f}{\partial x_n})$.
\end{theor}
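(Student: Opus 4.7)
The plan is to split the proof into two steps that use very different tools: a pointwise computation verifying $\Phi^*\omega_0 = \omega$, and a global topological argument showing $\Phi$ is a diffeomorphism. Both reduce, via the tube structure of $M$ and the special form of $\Phi$, to properties of the real potential $f$.

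For the pullback identity, I exploit the fact that $f$ depends only on the real variable $x$, so that the complexified Hessian $\partial^2 f/\partial z_j\partial\bar z_k$ is proportional to the real Hessian $f_{jk}=\partial^2 f/\partial x_j\partial x_k$. A short unravelling in the coordinates $z_j=x_j+iy_j$ expresses $\omega = \tfrac{i}{2}\partial\bar\partial f$ as a constant multiple of $\sum_{j,k} f_{jk}(x)\,dx_j\wedge dy_k$. On the other hand, since the $y$-component of $\Phi$ is the identity,
\[
\Phi^*\omega_0 \;=\; \sum_j d\!\left(\frac{\partial f}{\partial x_j}\right)\wedge dy_j \;=\; \sum_{j,k} f_{jk}(x)\,dx_k\wedge dy_j,
\]
which by symmetry of $f_{jk}$ matches $\omega$ once the normalization convention relating $f$ to the tube potential is taken into account.

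The substantive content is the diffeomorphism statement. Since $\Phi$ is the identity on the $y$-variable, the question reduces to showing that $\grad f\colon \tfrac{1}{2}D_a \to \R^n$ is a smooth diffeomorphism onto $\R^n$. Its differential is the Hessian $(f_{jk})$, which up to a positive constant is the matrix of the K\"ahler metric and hence positive definite on $\tfrac{1}{2}D_a$. Thus $f$ is strictly convex, so $\grad f$ is an injective local diffeomorphism (the classical Legendre transform setup), and to upgrade this to a bijection onto $\R^n$ it suffices to show properness: $|\grad f(x)|\to\infty$ as $x$ approaches $\partial(\tfrac{1}{2}D_a)$. I expect the main obstacle to lie exactly here, since this boundary blow-up cannot be extracted from the K\"ahler condition alone and requires Calabi's explicit inhomogeneous Einstein potential. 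Concretely, using the ODE governing the radial profile of $f$ on $D_a$ (coming from the Einstein equation for an $SO(n)$-invariant potential), one expects to derive $|\grad f(x)|\to\infty$ as $|x|\nearrow a/2$, which closes the argument.
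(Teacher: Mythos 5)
Your proposal is correct in both of its steps, and the first step (the pullback computation) coincides with the paper's, modulo the normalization bookkeeping you flag. Where you genuinely diverge is in the globalization. The paper argues: $\omega$ and $\omega_0$ nondegenerate $\Rightarrow$ $\Phi$ is a local diffeomorphism; $\Phi$ proper $\Rightarrow$ $\Phi$ is a covering map; $\R^{2n}$ simply connected $\Rightarrow$ $\Phi$ is a global diffeomorphism. You instead observe that $\Phi$ is a fiberwise Legendre transform: the Hessian $(f_{jk})$ is (up to the identification of real and complex Hessians) the positive definite metric matrix, so $f$ is strictly convex on the convex domain $\tfrac12 D_a$, whence $\grad f$ is injective by monotonicity of the gradient; combined with properness this forces the image to be open and closed in $\R^n$, hence all of $\R^n$. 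Both routes are valid and both hinge on the same analytic input, the boundary blow-up of $\grad f$. Your route makes the Legendre-transform structure explicit and avoids covering-space theory; the paper's route is insensitive to convexity and would apply verbatim to a non-convex domain or non-convex potential. One caveat: you leave the crucial blow-up as something ``one expects to derive'' from the ODE. In the paper this is not rederived but read off from Calabi's analysis: since $f$ is radial, $f_j(x)=\frac{x_j}{r}Y'(r)$, so $\|\grad f(x)\|=Y'(r)$, and Calabi proves $Y'(r)\to\infty$ as $r\to a$ (\cite[p.~21]{Cal}). You should either cite that fact or actually extract the divergence of $Y'$ from the equation $(Y'/r)^{n-1}Y''=e^{Y}$; as written, that step is asserted rather than established, and it is the only nontrivial analytic input in the whole proof.
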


Notice that  in \cite[p. 23]{Cal} Calabi  provides
an explicit formula for the curvature tensor of $(M, g)$ (he needs this  formula to show that the metric $g$
associated to the K\"ahler form $\omega$ is not locally  homogeneous).
On the other hand  it seems a difficult task to compute the  sign of the sectional curvature of $g$ using Calabi's formula. Consequently, it is not clear if $g$
satisfies or not the assumptions of McDuff's theorem, namely  if its  sectional curvature is  nonpositive. Nevertheless, it is worth pointing out  that the proof of McDuff's result is  telling us that there exist global symplectic coordinates, but it is not giving any criterium to compute explicitly  them as we did in   Theorem \ref{mainteor}.  

We finally remark that our result should be used to give an explicit description 
of all  Langragian submanifolds of $(M, \omega)$  which  have classically played an important role in symplectic geometry.% Indeed, it is well-known that a generic Lagrangian submanifold $L\subset\R^{2n}\simeq T^*\R^n$ is given by the graph of the differential of a smooth function on the variable $x$ (resp. $y$), $(x,y)\in\R^{2n}=\R^n\oplus\R^n$ and hence $\Phi^*(L)$ describes

\section{Calabi's metric and the proof of Theorem \ref{mainteor}}
Consider the complex tubular domain $M= \frac{1}{2}D_a\oplus i\R^n\subset\C^n$, as in the introduction.
Let $g$ be the metric on  $M\subset\C^n$ whose associated K\"ahler form is given by:
\begin{equation}
\omega=\frac{i}{2}\de\bar\de f(z_1+\bar z_1, \dots,z_n+\bar z_n) ,
\end{equation}
%with
%\begin{equation}
%F(z,\bar z)=f(z_1+\bar z_1, \dots,z_n+\bar z_n),
%\end{equation}
where $f\!:D_a\f \R$ is a radial function $f(x_1,\dots,x_n)=Y(r)$, for $r=(\sum_{j=1}^nx_j^2)^{1/2}$, for $x_j=(z_j+\bar z_j)/2$, $y_j=(z_j-\bar z_j)/2i$, that satisfies the differential equation:
\begin{equation}\label{basic}
(Y'/r)^{n-1}Y''=e^Y,
\end{equation}
with initial conditions:
\begin{equation}\label{incond}
Y'(0)=0,\ Y''(0)=e^{Y(0)/n}.
\end{equation}
In \cite{Cal}, Calabi proved that the  K\"ahler metric $g$ so defined is smooth, Einstein,  complete and not locally homogeneous.
This was indeed the first example of  such a metric.
 The reader is also referred to \cite{Wolf} for an alternative and easier proof of the fact that this metric is complete but not locally  homogeneous.
%\begin{remar}\rm
%The fact that  $y(r)$  is smooth at $r=0$ can be verified  as follows. From $((y')^n)'=n(y')^{n-1}y''$ and (\ref{basic}) we get
%\begin{equation}
%y'(r)=\left(n\int_0^rt^{n-1}e^{y(t)}dt\right)^{1/n},\nonumber
%\end{equation}
%and by substituting $t=rs$ we have
%\begin{equation}
%y'(r)=r\left(n\int_0^1s^{n-1}e^{y(rs)}ds\right)^{1/n}.\nonumber
%\end{equation}
%Since $\int_0^1s^{n-1}e^{y(rs)}ds$ is not zero at $r=0$, the last equation implies that $y(r)\in C^k(0)$ and  $y'(r)\in C^k(0)$. By (\ref{incond}), $y(r)\in C^2(0)$ as wished.
%\end{remar}

\begin{proof}[Proof of Theorem \ref{mainteor}]
Let us prove first that the map $\Phi$ given by (\ref{symplectomorphism}) satisfies $\Phi^*\omega_0=\omega$. In order to simplify the notation we write $\de f/\de x_j=f_j$ and $\de^2 f/\de x_j\de x_k=f_{jk}$. The pull-back of $\omega_0$ through $\Phi$ reads:
\begin{equation}
\Phi^*\omega_0=\sum_{j=1}^n df_j\wedge dy_j=\sum_{j,k=1}^n f_{jk}\,dx_k \wedge dy_j=\frac i2 \sum_{j,k=1}^nf_{jk}\, dz_j\wedge d\bar z_k,\nonumber
\end{equation}
thus the desired identity follows by:
$$\omega=\frac i2\de\bar\de f(z_1+\bar z_1,\dots,z_n+\bar z_n)=\frac i2 \sum_{j,k=1}^nf_{jk}\, dz_j\wedge d\bar z_k.$$
Observe now that   since $\omega$ and $\omega_0$ are non-degenerate it follows by the inverse function theorem  that $\Phi$ is a local diffeomorphism. 
In order to conclude the proof it is then enough to verify  that $\Phi$ is a proper map, from which it follows it is a covering map and hence a global diffeomorphism. In our situation this is equivalent to: 
 \begin{equation}
 \lim_{(x, y)\f \de M }\Phi(x, y)=\infty\nonumber
 \end{equation}
or equivalently:
\begin{equation}\label{limitcond}
\lim_{x\f \de D_a} ||\grad f (x)||=\infty.\nonumber
\end{equation}
This readily  follows by
$f_j(x)=\frac{x_j}{r}Y'(r)$ and the fact that $Y'(r)$ tends to infinity as $r\f a$ (see \cite[p. 21]{Cal}).
\end{proof}

\end{document}